\date{}  
\newtheorem{theorem}{Theorem} %pag 128 
\newcommand{\mail}[1]{\href{unina:#1}{\texttt{#1}}}
\author{ Fabio De Angelis\thanks{Dept. of  Structures for Engineering and Architecture, University of Naples Federico II,   \newline
 Via Claudio 21, 80125, Naples, Italy.} \hspace{3mm}
    Monica De Angelis \thanks{ Dept. of Mathematics and Applications "R. Caccioppoli", University of Naples Federico II,  \newline
   Via Cinthia 26,80126, Naples, Italy.
\newline\mail{modeange@unina.it},}
}
\title{ On solutions related to FitzHugh-Rinzel type  model  }
\begin{document}

\maketitle
%\title{On solutions to  a  FitzHugh-Rinzel type  model %\thanks{Grants or other notes
%about the article that should go on the front page should be
%placed here. General acknowledgments should be placed at the end of the article.}
%}
%\subtitle{Do you have a subtitle?\\ If so, write it here}

%\titlerunning{Short form of title}        % if too long for running head

%\author{Fabio De Angelis         \and
    %    Monica De Angelis %etc.
%}

%\authorrunning{Short form of author list} % if too long for running head

%\institute{F. De Angelis \at
              %Dept. of  Structures for Engineering and Architecture, University of Naples Federico II, Via Claudio 21, 80125 Naples, Italy\\
               %  \\
%             \emph{Present address:} of F. Author  %  if needed
           %\and
%           M. De Angelis  \at  Dept. of Mathematics and Applications "R. Caccioppoli", University of Naples Federico II, Via Cinthia 26, 80126, Naples, Italy \\\email{modeange@unina.it}      
%}

%\date{Received: date / Accepted: date}
% The correct dates will be entered by the editor

\maketitle

\begin{abstract}
A  ternary autonomous dynamical  system of  FitzHugh-Rinzel type is analyzed. The system, at start,  is reduced to a nonlinear integro differential equation. The fundamental solution $ H(x,t)  $ is explicitly determined and the initial value problem is analyzed  in the whole  space. The solution is expressed by means of  an  integral equation  involving  $ H(x,t)  $. Moreover, adding an extra  control term,  explicit solutions  are   achieved. 
%\keywords{FitzHugh-Rinzel model \and Fundamental solution \and Laplace transform\and Exact solutions}
% \PACS{PACS code1 \and PACS code2 \and more}
% \subclass{35K47 \and 35K25 \and 78A70 \and 35E05 \and 44A10}
\end{abstract}

\section{Introduction }
\label{intro}

The FitzHugh-Rinzel  (FHR) system   \cite{bbks,ws15,ZB,k2019} is  a three-dimensional model deriving  from the FitzHugh-Nagumo  (FHN) model \cite{i,dr13,krs,nono,r1,r2,mda13,glrs} 
developed  to incorporate bursting phenomena of nerve cells. Indeed,  a number of different  cell types exhibit a behaviour characterized by brief bursts of oscillatory activity alternated  by quiescient periods during which the membrane potential only  changes slowly, and this behaviour is called bursting, see e.g. \cite{ks}. Accordingly, bursting oscillations are characterized  by a variable of the system  that  changes periodically from an active phase of rapid spike oscillations to a silent phase.  These phenomena are  becoming increasingly important as they are being investigated  in many scientific fields. 
   Indeed, phenomena of  bursting  have been observed as  electrical behaviours in many nerve and
endocrine cells such as hippocampal and  thalamic neurons, mammalian midbrain  and pancreatic in  $ \beta -$ cells, see e.g. \cite{bbks} and references therein. Also, in the cardiovascular system, bursting oscillations
are generated by the electrical activity of cardiac cells that  excite the heart membrane
to produce the contraction of ventricles and auricles \cite{qmv}. Furthermore, bursting oscillations represent a topic of  potential  interest in   dynamics and bifurcation mechanisms of  devices and  structures and in   
the analysis of nonlinear problems in mechanics \cite{dcgda18,sandra,dea18,candd19,sw}. Recent studies proved that the development of this field helps in the  studying of  the restoration  of  synaptic  connections. Indeed, it seems that nanoscale memristor devices  have potential to reproduce the behaviour of a biological synapse \cite{jnbbikem,clag}.   This would lead in the  future,  also in case of traumatic lesions, to the introduction of electronic synapses to connect neurons directly.

The paper is organized as follows. In section 1.1 the mathematical problem is defined and the state of the art with  the aim of the paper are discussed. In section 2,  the explicit expression  of the  fundamental solution $ H(x,t)  $ is achieved and  some   properties  are proved. In section 3 the integral solution for    the initial value problem is given. In section 4 the insertion of  an extra term  allows us to obtain  explicit solutions for the model.

\subsection{Mathematical considerations, state of the art and aim of the paper}
Generally, denoting by $ D, \varepsilon, \beta, c $ constant parameters, the  (FHN) model  is a p.d.e. system   such that
\begin{equation}
\label{FHN}
  \left \{
   \begin{array}{lll}
    \displaystyle{\frac{\partial \,u }{\partial \,t }} =\,  D \,\frac{\partial^2 \,u }{\partial \,x^2 }
     \,-\, w\,\, + f(u ) \,  \\
\\
\displaystyle{\frac{\partial \,w }{\partial \,t } }\, = \, \varepsilon (-\beta w +c +u),
  \end{array}
  \right.
\end{equation}                                                     
where function $\, f(u)\,$ depends on the reaction kinetics of the model. In  the literature   $ f(u) $ can  assume  a piecewise linear form, see, e.g. \cite{drw}   and reference therein,  or  $ f(u) = u-u^3/3$ \cite{glrs}.    
However, in general,  one has \cite{i,ks}:
\begin{equation}                 \label{12}
f(u)= u\, (\, a-u \,) \, (\,u-1\,) \,\,\,\,\,\,\, \,\, (\,0\,<\,a\,< \,1\,).
\end{equation}

As for the FitzHugh-Rinzel  model,  most of the  articles consider the following system characterized by three o.d.e.:

\begin{equation}
\label{FHR}
  \left \{
   \begin{array}{lll}
    \displaystyle{\frac{\partial \,u }{\partial \,t }} =\, u-u^3/3 + I_{ext} 
     \,-\, w\,\,+y\,\, \,  \\
\\
\displaystyle{\frac{\partial \,w }{\partial \,t } }\, = \, \varepsilon (-\beta w +c +u)
\\
\\
\displaystyle{\frac{\partial \,y }{\partial \,t } }\, = \,\delta (-u +h -dy)
   \end{array}
  \right.
\end{equation}                                                      
where    $ I_{ext}, \varepsilon ,\beta,c,d,  h,\beta ,\delta  $  indicate arbitrary constants.

In this paper, in order to evaluate the contribute of a diffusion term,  the following  FitzHugh-Rinzel  type system  is   considered:

\begin{equation}
\label{11}
  \left \{
   \begin{array}{lll}
    \displaystyle{\frac{\partial \,u }{\partial \,t }} =\,  D \,\frac{\partial^2 \,u }{\partial \,x^2 }
     \,-\, w\,\,+y\,\, + f(u ) \,  \\
\\
\displaystyle{\frac{\partial \,w }{\partial \,t } }\, = \, \varepsilon (-\beta w +c +u)
\\
\\
\displaystyle{\frac{\partial \,y }{\partial \,t } }\, = \,\delta (-u +h -dy).
   \end{array}
  \right.
\end{equation}

Indeed,  the second order term with    $ D > 0 $  represents just the diffusion contribution and it can be associated to the axial current in the axon. It derives from the  Hodgkin-Huxley (HH) theory  for nerve membranes where, if  $ d $ represents the axon diameter and $ r_i$ is the resistivity, the spatial variation in the potential $ V  $ gives the term $ (d/4 r_{i}) V_{xx} $  from  which term $ D\, u_{xx} $  derives \cite{m2}.
  
Moreover it is also assumed   $ \,\beta, \,\,d,\,  \varepsilon, \,\,\delta \,\, $  as  positive constants that together  with  $  c,\,\, h,\, $ characterize the model's kinetic.

Model (\ref{11}) can be considered as  a two time-scale slow-fast system  with  two fast variables $(u,w)$ and one slow variable $(y)$. However, if  for instance, $ \varepsilon = \delta $ the system can be considered as a two time-scale with one fast variable $u $ and two slow variables $(w,y)$. Otherwise, if $ \delta $ and $ \varepsilon  $ have significant difference, it can also be considered as a three-time-scale system with the fast variable $ u, $ the intermediate variable and the slow
 variable \cite{xxcj}.

As  for  function $f (u)$ one  considers  the non-linear form expressed in formula (\ref{12}). As a consequence, it results

\begin{equation}      \label{14}
f(u)\, =\,-\,a\,u\, +\, \varphi(u) \quad with \quad  \varphi(u) \,=\, u^2\, (\,a+1\,-u\,)\quad  0<a<1 
\end{equation}

 Then, the system (\ref{11}) becomes

\begin{equation}
\label{15}
  \left \{
   \begin{array}{lll}
    \displaystyle{\frac{\partial \,u }{\partial \,t }} =\,  D \,\frac{\partial^2 \,u }{\partial \,x^2 } -au \,\,
     \,-\, w\,\,+y\,\, + \varphi(u ) \,  \\
\\
\displaystyle{\frac{\partial \,w }{\partial \,t } }\, = \, \varepsilon (-\beta w +c +u)
\\
\\
\displaystyle{\frac{\partial \,y }{\partial \,t } }\, = \,\delta (-u +h -dy).
   \end{array}
  \right.
\end{equation}

Indicating by means of

\begin{equation}      \label{ic}
u(x,0)\, =\,u_0 \,, \qquad w(x,0)\, =\,w_0  \qquad   y(x,0)= y_0,\qquad\qquad ( \,x\,  \in \Re\,)
\end{equation}
the initial values, from   $(\ref{15})_{2,3} $   it follows:

\begin{equation}
\label{17}
\left \{
   \begin{array}{lll}
\displaystyle  w\, =\,w_0 \, e^{\,-\,\varepsilon \beta\,t\,} \,+\, \frac{c}{\beta}\,( 1- e^{- \, \varepsilon \, \beta \,t} )\,+ \varepsilon \int_0^t\, e^{\,-\,\varepsilon \,\beta\,(\,t-\tau\,)}\,u(x,\tau) \, d\tau 
  \\
  \\
\displaystyle  y\, =\,y_0 \, e^{\,-\,\delta\, d\,t\,} \,+\, \frac{h}{d}\,( 1- e^{ - \, \delta \,d  \,t} )\,- \delta \int_0^t\, e^{\,-\,\delta \,d\,(\,t-\tau\,)}\,u(x,\tau) \, d\tau.
 \end{array}
  \right.  
\end{equation}

Consequently, denoting the source term by

\begin{equation}
\label{18}
\displaystyle F(x,t,u) =\varphi (u)  - w_0(x)  e^{- \varepsilon  \beta  t}+ y_0(x)  e^{- \delta  d  t}- \frac{c}{\beta}( 1- e^{- \varepsilon \beta t} )+\frac{h}{d}( 1- e^{ -\delta d t} ),
\end{equation}
problem (\ref{15})-(\ref{ic}) can be modified into  the  following  initial value problem   $\,{\cal P}$:

	  \begin{equation}                                                     \label{19}
  \left \{
   \begin{array}{lll}
   \displaystyle  u_t - D  u_{xx} + au + \int^t_0 [ \varepsilon e^{- \varepsilon  \beta (t-\tau)}+ \delta e^{- \delta d (t-\tau)} ]u(x,\tau)  d\tau = F(x,t,u) \\    
\\ 
 \displaystyle \,u (x,0)\, = u_0(x)\,  \,\,\,\,\, x\, \in \, \Re. 
   \end{array}
  \right.
 \end{equation}

As for the state of art,   mathematical considerations  allow   to  assert that the knowledge of the fundamental solution  $ H(x,t) $ related to the
linear parabolic operator $ { L }: $

 \begin{equation} \label{a1}
 {L }u\equiv  u_t - D  u_{xx} + au + \int^t_0 [ \varepsilon \,e^{-\, \varepsilon  \beta (t-\tau)}\,+ \delta e^{-\, \delta d (t-\tau)} ]u(x,\tau) \, d\tau , 
 \end{equation} 
  leads   to determine   the solution  of  $\,{\cal P}$. Indeed,  if $\, F(\,x,\,t,\,u\,)\,$  verifies appropriate assumptions, through the fixed point theorem,  solution can be expressed by means of   an  integral equation, see f.i. \cite{c,dr8}.

Moreover,  according to \cite{dr8},
when operator $ L  $ assumes a similar but simpler form,  many properties and inequalities are    achieved.

The aim of the  paper is to  explicitly determine   the fundamental solution $ H(x,t)  $ which   involves naturally the  diffusion constant $ D.  $ Then, the initial value problem in all  the space is analyzed and the solution is deduced by means of an  integral equation.  Moreover, using  a method of travelling wave,  solutions  of  a modified  FitzHugh-Rinzel  type system have  been  explicitly determined pointing out  the influence of the  diffusion parameter D.

\section{Fundamental solution  and its properties}

Indicating by $\,T\,$   an arbitrary  positive constant, let us  consider the  initial- value problem
   (\ref{19}) defined in the whole space $  \Omega_T: $
\begin{center}

$\  \Omega_T =\{(x,t) :  x \in \Re
, \  \ 0 < t \leq T \},$
\end{center}
 and let us denote  by

\[\hat u (x,s) \, = \int_ 0^\infty \, e^{-st} \, u(x,t) \,dt \,\,, \,\,\,  \,\hat F (x,s)   \, = \int_ 0^\infty \,\, e^{-st} \,\, F\,[x,t,u (x,t)\,] \,dt \,,\,\]
 the Laplace transform with respect to $\,t.\,$ If $\,\hat  H (x,s)\,$ expresses   the  $ {\cal L}_t$ transform    of  the fundamental solution  $\,  H ( x,t),\, $ from (\ref{19}) it follows:

 \begin{equation}                                                     \label{21}
  %\left \{
   %\begin{array}{lll}
%\begin{center}
   {\hat u }(x,s)\,=\,  \int_\Re \, \hat H \,(\, x-\xi, s\,)\, [\,u_0(\,\xi\,) \,+\,\hat F(\xi,s)\,]\,d\xi\,, 
%\end{center}
\end{equation} 
 and  {\em  formally}   it follows that

\begin{eqnarray}  \label{A14}
 & \displaystyle \nonumber u (x,t)  =\int_\Re   \,H ( x-\xi, t)\,\, u_0 (\xi)\,\,d\xi \,+
 \\
 \\
 &\displaystyle \nonumber\,+\,\int ^t_0     d\tau \int_\Re   H ( x-\xi, t-\tau)\,\, F\,[\,\xi,\tau, u(\xi,\tau\,)\,]\,\, d\xi.
\end{eqnarray}

So that,  denoting by     $ J_1 (z) \,$  the  Bessel function of first kind and order $\, 1,\,$  let us  consider the following  functions:

   \begin{eqnarray} \label {22}
\nonumber \displaystyle  &H_1(x,t)\,=\, \, \, \frac{e^{- \frac{x^2}{4\,D\, t}\,}}{2 \sqrt{\pi  D t } }\,\,\, e^{-\,a\,t}\,+
 \, 
 \\
 \\
&\nonumber \displaystyle - \frac{1}{2} \,\,    \,\,\,\int^t_0  \frac{e^{- \frac{x^2}{4 \, D\, y}\,- a\,y}}{\sqrt{t-y}} \,\, \, \frac{\,\sqrt{\varepsilon} \,\, e^{-\beta \varepsilon \,(\, t \,-\,y\,)}}{\sqrt{\pi \, D \,}} J_1 (\,2 \,\sqrt{\,\varepsilon \,y\,(t-y)\,}\,\,)\,\,\} dy,
   \end{eqnarray}

\begin{equation} \label{H2}
 \displaystyle H_2 =\int _0 ^t  H_1(x,y) \,\,e^{ -\delta d (t-y)} \sqrt{\frac{\delta y}{t-y}}   J_1( \,2 \,\sqrt{\,\delta \,y\,(t-y)\,}\,\,\, dy.
 \end{equation} 

Besides, by setting
\begin{eqnarray}
  \displaystyle   \sigma^2 \ \,=\, s\, +\, a \, + \, \frac{\delta}{s+\delta d}\, + \, \frac{\varepsilon}{s+\,\beta \varepsilon},\,\,
\end{eqnarray}
and by denoting  

\begin{equation} \label{A16}
\displaystyle H = H_1  - H_2,
\end{equation}
the following theorem holds:

\begin{theorem}

  In the half-plane $ \Re e  \,s > \,max(\,-\,a ,\,-\beta\varepsilon ,- \delta d\,)\,$    the Laplace integral  $\,{\cal L  }_t\, H \, \,$  converges absolutely for all  $\,x>0,\,$   and it results:
 \end{theorem}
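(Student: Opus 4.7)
The strategy is to compute $\mathcal{L}_t H = \mathcal{L}_t H_1 - \mathcal{L}_t H_2$ term by term, using Fubini and the classical Laplace transform identities for the Gaussian kernel and for $J_1$. The target formula is the rational-exponential expression $\hat H(x,s) = e^{-|x|\sigma/\sqrt{D}}/(2\sqrt{D}\,\sigma)$, which is what one formally obtains by applying $\mathcal{L}_t$ to the equation $LH = \delta(x)\delta(t)$ and solving the resulting ODE in $x$.

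I would first establish absolute convergence by producing a pointwise envelope $|H(x,t)|\le C(x)\,e^{-\mu t}$ valid for each $x>0$ and $t$ large, where $\mu$ can be taken arbitrarily close to $\min(a,\beta\varepsilon,\delta d)$. The Gaussian factor $e^{-x^2/(4Dy)}/\sqrt{y}$ is integrable in $y\in(0,\infty)$ for every fixed $x>0$; the elementary bound $|J_1(z)|\le \min(z/2,1)$ tames the Bessel factors; and the exponentials $e^{-at}$, $e^{-\beta\varepsilon(t-y)}$, $e^{-\delta d(t-y)}$ furnish the required uniform decay. Integrating against $e^{-st}$ then yields absolute convergence precisely for $\Re s > -\mu$, hence for the stated half-plane.

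For $\hat H_1$, the heat-kernel piece has the standard transform $\mathcal{L}_t\bigl[e^{-x^2/(4Dt)-at}/(2\sqrt{\pi Dt})\bigr] = e^{-|x|\sqrt{(s+a)/D}}/(2\sqrt{D(s+a)})$. For the integral term I would apply Fubini, change variables $\theta = t-y$, and evaluate the inner $\theta$-integral by expanding $J_1$ in its power series and integrating termwise. The resulting key identity $\int_0^\infty e^{-p\theta}\theta^{-1/2}\,J_1(2\sqrt{c\theta})\,d\theta = (1-e^{-c/p})/\sqrt{c}$, specialised at $c=\varepsilon y,\ p=s+\beta\varepsilon$, reduces the outer $y$-integral to the difference of two Gaussian Laplace transforms, which collapses into a single exponential and delivers $\hat H_1$ in compact closed form. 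The same mechanism applied to $H_2$, with $(\varepsilon,\beta)$ replaced by $(\delta,d)$, produces $\hat H_2$; subtracting and simplifying gives the announced expression in $\sigma$.

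The delicate points are the interchange of sum and integral in the Bessel step (justified by absolute convergence of the series), the accounting for how the two convolution kernels combine into the single spectral parameter $\sigma^2 = s+a+\varepsilon/(s+\beta\varepsilon)+\delta/(s+\delta d)$, and the selection of the principal branch $\Re\sigma>0$ so that $e^{-|x|\sigma/\sqrt D}$ decays at infinity. The hypothesis $\Re s > \max(-a,-\beta\varepsilon,-\delta d)$ is exactly what is needed to keep $\sigma^2$ off the negative real axis and to guarantee convergence of the outer Laplace integral, so the two halves of the proof interlock and must be done together.
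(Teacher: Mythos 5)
Your proposal follows essentially the same route as the paper: absolute convergence via the bound $|J_1(z)|\le 1$ together with Fubini--Tonelli, then evaluation of $\hat H_1$ and $\hat H_2$ through the two classical identities $\int_0^\infty e^{-pt}\sqrt{c/t}\,J_1(2\sqrt{ct})\,dt = 1-e^{-c/p}$ and the Gaussian transform $\int_0^\infty \frac{e^{-x^2/4t}}{\sqrt{\pi t}}e^{-(s+\alpha)t}\,dt = e^{-x\sqrt{s+\alpha}}/\sqrt{s+\alpha}$, with the subtraction $\hat H=\hat H_1-\hat H_2$ collapsing into the single spectral parameter $\sigma$. The only differences are cosmetic (your decay envelope is spelled out slightly more than the paper's, and you propose proving the $J_1$ identity by termwise series integration where the paper simply cites it), so the argument matches.
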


\begin{equation}      \label{24}
\displaystyle \,{\cal L  }_t\,\,H\,\equiv \,\, \hat H \, =\int_ 0^\infty e^{-st} \,\, H\,(x,t) \,\,dt \,\,= \frac{1}{\sqrt{D}} \, \frac{e^{- \frac{|x|}{\sqrt{D}}\,\sigma }}{2 \,\sigma \,  }.
\end{equation}

Moreover, function H(x,t)  satisfies some  properties typical of the  fundamental solution of heat equation, such as:

%\begin{equation}                 \label{46} 
 a) \hspace{1mm} $ H( x,t) \, \, \in  C ^ {\infty}, \,\,\,\,$ \,$\,\,\, t>0, \,\,\,\, x \,\,\, \in \Re, $ 
%\end{equation}

b) \hspace{1mm} for fixed $\, t\,>\,0,\,\,\, H \,$  and its derivatives are vanishing esponentially fast as $\, |x| \, $  tends to infinity.

c) \hspace{1mm} In addition, it results   $ \displaystyle\lim _{t\, \rightarrow 0}\,\,H(x,t)\,=\,0, $ for any fixed $\, \eta \,>\, 0,\, $ uniformly for all $\, |x| \,\geq \, \eta.\, $

\begin{proof}

Since for all real $\, z\,$ one has $\, |J_1\,(z)|\, \leq \, 1 , $   the Fubini -Tonelli theorem assures that

\[ \,\, L_t \bigg( \frac{1}{2} \,\,    \,\,\,\int^t_0  \frac{e^{- \frac{x^2}{4 D y}\,- a\,y}}{\sqrt{t-y}} \,\, \, \frac{\,\sqrt{\varepsilon} \,\, e^{-\beta \varepsilon \,(\, t \,-\,y\,)}}{\sqrt{\pi \, D \,}} J_1 (\,2 \,\sqrt{\,\varepsilon \,y\,(t-y)\,}\,\,\bigg)\,dy \,= \]

\[ = \,\, {\frac{ \sqrt \varepsilon  }{2\, \sqrt{\pi  D  }} }\, \,\,\,\ \int ^\infty_0   e^{- (s+a)y \,-\frac{ x^2}{4 D y} } 
\,\,dy\,\,\int ^\infty_0 e^{-(s+\beta \varepsilon )t}\, \,\,J_1 (2 \sqrt{\varepsilon  \,y\,t  }\,) \,\,\frac{dt}{\sqrt t}\,\,  \]
 and being 

\begin {equation} \label{25}
 \,\, \int ^\infty_0    \,\, {e^{- p\, t} }\,\,  \sqrt{\frac{c}{ t}}\,\,\,\,J_1 ( \, 2 \sqrt{c\,t}\,\,)\,\,dt \,=\, \, 1\,-\, e^{-\, c/ p\,}\,\,\,\,\,\,\, (\,\Re e  \,p > \,0\,), 
 \end{equation}

 \begin{equation} \label{26}
\int _0^\infty  \frac{e^{\,-x^2/4t\, }}{\, \sqrt{ \pi \,t\,}}\,  e^{- (s+\alpha) t}\,\, dt = \frac{e^{\,-x\, \sqrt{\,s+\alpha\,}}}{\, \sqrt{ \, s+\alpha\,}}\,\,  
\end{equation} 
 it results:

\begin{equation} \label{27}
  \hat H_1 (x,s)= \frac{1}{2 \sqrt{\pi D }}\int_ 0^\infty e^{-\frac{x^2}{4Dy}- (s+a+\frac{ \varepsilon}{s+\beta\varepsilon})y}\frac{dy}{\sqrt y}=  \frac{1}{2  \sqrt D}\frac{e^{- \frac{|x|}{\sqrt{D}}\,r}}{r } 
\end{equation} 
where 

\begin{equation}
 r^2= s\, +\, a \, + \, \frac{\varepsilon}{s+\beta \varepsilon}.\,\,
\end{equation}

Besides,
since Fubini -Tonelli theorem and (\ref{25})   one has: 

\begin{equation}
\displaystyle \hat H_2 = \hat H_1 -\frac{1}{2\, \sqrt{\pi D }}\,\int_ 0^\infty \,\,e^{\,-\,\frac{\,x^2\,}{4 Dy}\,-\, (s+a+\frac{\, \varepsilon\,}{s+\beta\varepsilon\,}+\frac{\delta}{s+ d \delta }\,)\,y\,}\,\,\frac{dy\,}{\sqrt y\,}\,\,
\end{equation}
from which, since (\ref{26}),

\begin{equation} \label{29}
\,  \hat H \, (x,s)\,=\,  \frac{1}{2 \,\, \sqrt D \, \,}\,\,\,\frac{e^{- \frac{\,|x|\,}{\sqrt{D}}\sigma }}{\,\sigma  \,}
\end{equation}
is deduced.

Besides, by means of property of convolution  for which $ f*g=g*f, $   since  (\ref{22}) and (\ref{H2}), property a) is evident. Moreover, properties b) and c) are proved following theorem 3.2.1 of \cite{c}. In particular, as for property c), for $ |x|\geq \eta $ and since $ |J_1 (z) |\leq 1, $ it results:

\begin{equation}
\displaystyle  |H_1|\,=\, \, \, \frac{e^{- \frac{\eta^2}{4\,D\, t}\,}}{2 \sqrt{\pi  D t } }\,\,\, e^{-\,a\,t}\,+\sqrt{\frac{\varepsilon \,t}{\pi D}};
\end{equation}

\begin{equation} 
 \displaystyle| H_2 |\,\, \leq  \sqrt{\frac{\delta \, t}{4\pi D}}  +  2 \,  \sqrt{\frac{\varepsilon   \delta }{\pi D}} \, \,t
 \end{equation} 
 from which property  follows.
\hbox{}
 \hfill
\rule {1.85mm}{2.82mm}
\end{proof}

Now,  introducing the following functions:

\begin{equation}  \label{A22}
 \left \{
   \begin{array}{lll}
 \displaystyle \varphi(x,t) =  \frac{e^{- \frac{x^2}{4\,D\, t}\,}}{2 \sqrt{\pi  D t } }\,\,\, e^{-\,a\,t};
\\ 
\\
 \displaystyle\psi_\varepsilon (y,t) =\, \frac{\,\sqrt{\varepsilon \,y} \,\, e^{-\beta \varepsilon \,(\, t \,-\,y\,)}}{\sqrt{t-y \,}} J_1 (\,2 \,\sqrt{\,\varepsilon \,y\,(t-y)\,};\,\,
\\ 
\\
  \displaystyle\nonumber\psi_\delta (y,t) = \frac{\,\sqrt{\delta \,y} \,\, e^{-\delta \,d  \,(\, t \,-\,y\,)}}{\sqrt{t-y \,}} J_1 (\,2 \,\sqrt{\,\delta\,y\,(t-y)\,};\,\,
   \end{array}
  \right.
\end{equation}
it results:

\begin{equation} \label{211}
H_1(x,t)= \varphi(x,t)- \, \int _0^t \varphi(x,y) \,\, \psi_\varepsilon (y,t)\, dy
\end{equation}

\begin{equation} \label{212}
H_2(x,t)=   \int _0^t   H_1(x,y) \,\,\psi_\delta (y,t)\,\, dy.
\end{equation}
Moreover, by denoting

\begin{equation}
\,\, g_1(x,t)\,\ast g_2(x,t)  = \int _0^t g_1(x,t-\tau) g_2(x,\tau) \,d\tau
\end{equation} 
the convolution with respect to $ t, $ for $\,t\,>\,0,\,$ as  proved in  \cite{dr8} by means of formula (20),(21) and (24),  it results:

\begin{equation} \label{215}
(\partial_t +a -D\partial_{xx}) H_1 = -\varepsilon e^{-\varepsilon \beta t } \ast H_1(x,t) = -\varepsilon  K_{\varepsilon}(x,t)
\end{equation}  
 where $\, K_ {\varepsilon}\, $ is given by 

\begin{equation}
K_{\varepsilon} ( x,t) \, = \, \frac{1}{2 \, \sqrt{\pi \, D}} \,\, \int^t_0 \,\, e^{- \frac{x^2}{4 D y}\,\,-a \,y\,-\,\beta \varepsilon (t-y)}\ \, \, J_0 \,(2 \,\sqrt{\varepsilon\,y\,(t-y)}\,)\,\,\frac{dy}{\sqrt{y}}.
\end{equation}     \label {216}  
Hence,  the following theorem can be proved:

\begin{theorem}
 For $\,t\,>\,0,\,$ it results   $ {L }\,H =0, $ i.e.

\begin{equation} \label{a11}
  H_t - D  H_{xx} + aH + \int^t_0 [ \varepsilon \,e^{-\, \varepsilon  \beta (t-\tau)}\,+ \delta e^{-\, \delta d (t-\tau)} ]H(x,\tau) \, d\tau  =0. 
 \end{equation}
\end{theorem}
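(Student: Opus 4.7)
The cleanest route is to verify the identity on the Laplace side and invoke uniqueness of the Laplace transform. The plan is to apply $\mathcal{L}_t$ term by term to $LH$, use Theorem 1 to substitute the closed form of $\hat H(x,s)$, and then observe that the resulting multiplier of $\hat H$ is identically zero by definition of $\sigma^{2}$.

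More explicitly, I would proceed as follows. First, by property c), $H(x,0^{+})=0$ for every $x\neq 0$, so $\mathcal{L}_t(H_t)=s\hat H$. The two memory terms are convolutions of $H$ with the exponential kernels $\varepsilon e^{-\varepsilon\beta t}$ and $\delta e^{-\delta d t}$, whose Laplace transforms are $\varepsilon/(s+\beta\varepsilon)$ and $\delta/(s+\delta d)$; hence after transforming termwise one gets
\begin{equation*}
\mathcal{L}_t(LH) \;=\; \Bigl(s+a+\frac{\varepsilon}{s+\beta\varepsilon}+\frac{\delta}{s+\delta d}\Bigr)\hat H \;-\; D\,\hat H_{xx}.
\end{equation*}
Next, from Theorem 1 the explicit form $\hat H(x,s)=\frac{1}{2\sqrt D}\,\sigma^{-1}e^{-|x|\sigma/\sqrt D}$ yields, for $x\neq 0$, the elementary identity $\hat H_{xx}=(\sigma^{2}/D)\,\hat H$. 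Substituting back, the bracketed coefficient becomes $s+a+\frac{\varepsilon}{s+\beta\varepsilon}+\frac{\delta}{s+\delta d}-\sigma^{2}$, which vanishes identically by the definition of $\sigma^{2}$. Therefore $\mathcal{L}_t(LH)\equiv 0$ in the half-plane of convergence specified by Theorem 1, and by injectivity of $\mathcal{L}_t$ on continuous functions we conclude $LH(x,t)=0$ for all $t>0$ and $x\neq 0$.

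A purely time-domain alternative (closer in spirit to formula (\ref{215}) already cited in the text) would start from $(\partial_t+a-D\partial_{xx})H_1=-\varepsilon e^{-\varepsilon\beta t}\ast H_1$, apply $\partial_t+a-D\partial_{xx}$ to $H_2=\int_0^t H_1(x,y)\psi_\delta(y,t)\,dy$ using Leibniz' rule together with the small-time behaviour of $\psi_\delta$, and then assemble the pieces so that the $\delta$-convolution term generated by differentiating $H_2$ cancels against $\delta e^{-\delta d t}\ast H$. I would mention this as a parallel derivation but carry out only the transform argument in detail.

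The one technical point that needs care is the exchange of $\mathcal{L}_t$ with the operator $L$, in particular with the $x$-derivative $\partial_{xx}$ and with the two time convolutions. For the convolutions this is just the product rule for Laplace transforms, valid whenever both factors are of exponential order, which follows from the exponential decay estimates used in Theorem 1. For $\partial_{xx}$ the justification comes from the exponential decay in $x$ of $\hat H$ (visible from the explicit formula), which permits differentiation under the integral sign; the removable issue at $x=0$ does not affect the statement since we only claim $LH=0$ for $t>0$ on the regularity set provided by property a). This routine verification is where I expect the bookkeeping to be densest, but no conceptual obstacle arises.
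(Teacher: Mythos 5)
Your argument is correct in substance but follows a genuinely different route from the paper. You verify the identity in the Laplace domain: termwise transformation of $LH$ produces the multiplier $s+a+\varepsilon/(s+\beta\varepsilon)+\delta/(s+\delta d)=\sigma^2$, and the closed form (\ref{24}) gives $\hat H_{xx}=(\sigma^2/D)\,\hat H$ for $x\neq 0$, so $\mathcal{L}_t(LH)=0$ and injectivity of the transform finishes the job. The paper instead works entirely in the time domain: starting from (\ref{215}) for $H_1$, it applies $\partial_t+a-D\partial_{xx}$ to the representation $H_2=\int_0^t H_1(x,y)\,\psi_\delta(y,t)\,dy$, integrates by parts, uses the Bessel identity $\partial_t\psi_\delta+\partial_y\psi_\delta=\delta e^{-\delta d(t-y)}J_0(2\sqrt{\delta y(t-y)})$ to produce the kernel $K_\delta$ of (\ref{38}), and assembles (\ref{aaa}) together with $e^{-\delta d t}\ast H=K_\delta$ from (\ref{317}) so that all terms cancel. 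Your route is shorter and leans on Theorem 1, but it shifts the analytic burden onto justifying $\mathcal{L}_t(H_t)=s\hat H$ (via property c)), the commutation of $\mathcal{L}_t$ with $\partial_{xx}$, and the passage from $x\neq 0$ to $x=0$; this last point you dismiss a little too quickly, since property a) asserts smoothness at $x=0$ for $t>0$, so you should add one line noting that $LH(\cdot,t)$ is continuous in $x$ and hence vanishes at $x=0$ by continuity. The paper's computation buys more than the theorem itself: the auxiliary identities (\ref{aaa}) and (\ref{317}) and the kernel $K_\delta$ are exactly what Section 3 reuses to evaluate the convolutions of $H$ with $e^{-\delta d t}$ and $e^{-\beta\varepsilon t}$ in the integral representation of the solution, so the time-domain detour is not wasted effort.
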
 
\begin{proof}
Let us consider that:

\begin{eqnarray}
\nonumber &(\partial_t +a -D\partial_{xx}) H_2 =  H_1(x,t) \psi_{\delta}(t,t) +\int_0^t H_1(x,y) \,\big[\, \partial_t \psi_\delta (y,t)+a \psi_\delta (y,t)\big]   
\\
\\
\nonumber &- \, D\,\int _0^t \, \psi_\delta (y,t)\,\, \partial_{xx}H_1 (x,y)\,\, dy.
\end{eqnarray}
Accordingly, given relation (\ref{215}),  one has

\begin{eqnarray}
\nonumber &(\partial_t +a -D\partial_{xx}) H_2 =  \int _0^t \big[ H_1(x,y) \,\, \partial_t \psi_\delta (y,t) - \psi_\delta (y,t) \,\,\partial_y H_1(x,y) \, \big] dy + 
\\  \nonumber\\
&+H_1(x,t) \psi_{\delta}(t,t) - \displaystyle \varepsilon \int_0^t  K_ {\varepsilon} (x,y) \psi_\delta (y,t) dy .
\end{eqnarray}
Besides, considering that: 

\begin{equation}
\int _0^t \psi_\delta (y,t) \partial_y H_1(x,y) \,  dy =  H_1(x,t) \psi_{\delta}(t,t) -\int _0^t H_1(x,y) \,\, \partial_y \psi_\delta (y,t) \,  dy, 
\end{equation}
 one has: 

\begin{eqnarray}
\nonumber  \displaystyle &(\partial_t +a -D\partial_{xx}) H_2 =  \int _0^t  H_1(x,y) \,\,\big[ \partial_t \psi_\delta (y,t) + \partial_y\psi_\delta (y,t)  \big] dy  
\\  \nonumber\\
&- \displaystyle \varepsilon \int_0^t  K_ {\varepsilon} (x,y) \psi_\delta (y,t) dy   
\end{eqnarray}
where it results:

\begin{equation}
 \partial_t \psi_\delta (y,t) + \partial_y\psi_\delta (y,t) =\delta  e^{-\delta d  \,(\, t \,-\,y\,)}J_0 (\,2 \,\sqrt{\,\delta \,y\,(t-y)\,}.\, 
 \end{equation}
So that, denoting by

\begin{eqnarray} \label{38}
 K_ \delta (x,t) \equiv \int^t_0 \, e^{- \delta d \,(t- y)}\,  H_1 (x,y)  \, J_0 \, (\, 2\, \sqrt{\delta\, y ( t-y)  }\,) \,\,dy, 
\end{eqnarray}
one has:

\begin{eqnarray}
\nonumber  \displaystyle &(\partial_t +a -D\partial_{xx}) H_2 =   \delta K_\delta -  \varepsilon \int_0^t  K_ {\varepsilon} (x,y) \psi_\delta (y,t) dy.
\end{eqnarray}

Consequently, since for equation   (\ref{215}) one has $K_ {\varepsilon} (x,t) =    \, H_{1}(x,t)\ast e^{\,-\varepsilon \beta \,t}, $ 
by means of Fubini -Tonelli theorem and (\ref{212}), it is  proved that: 

\begin{eqnarray} \label{aaa}
  \displaystyle &(\partial_t +a -D\partial_{xx}) H_2 =   \delta K_\delta - \varepsilon \,e^{-\varepsilon \beta t } \ast H_2(x,t).\,
\end{eqnarray}

On the other hand, the convolution $ e^{-\,\delta d  \,t} * H(x,t) $ is given by 

\[e^{-\delta d t} * H(x,t) = e^{-\delta d t} *H_1(x,t) - \int^t_0  H_1 (r,y) dy  \int^t_y  e^{-\delta d  (t-\tau)} \psi_\delta (y,\tau)\, d\tau \]  with

\begin{equation}                              \label{37}
  \int^t_y    e^{- \delta d (t-\tau)}  \psi_\delta  ( y,\tau) d\tau=   e^{- \delta d (t-y)}  \int ^t_y \sqrt{\frac{{\delta \,y}}{\tau -y } }\,\,\,J_1 \, (\, 2\, \sqrt{\delta\, y ( \tau-y)  }\,)\,\, d\tau =
\end{equation}
\[ \,=\,\, e^{\,-\, \delta d \,(t-y)} \,  \ 
\Bigl[
\,1\,-\ \,J_0 \, (\, 2\, \sqrt{\delta \, y ( t-y)  }\,)\,\,
\Bigr]. \]
As  a consequence, it results:

\begin{equation}                              \label{317}
  e^{-\,\delta d \,t} \ast\,H =   K_\delta.
\end{equation}

Therefore, given relations (\ref{215}),  (\ref{aaa}), (\ref{317}),  theorem holds.  
\hbox{}
 \hfill\rule {1.85mm}{2.82mm}
 
\end{proof}

\section{Solution related to the (FHR) problem}

To  provide  the solution by means of the integral expression (\ref{A14}), some convolutions need to be  determined.

In order to  evaluate  $  \int_0^t\,d\tau\, \int_\Re \,H(\xi,\,\tau) \, \,d\xi , $  let us start to observe that

\begin{eqnarray}
 \nonumber &\displaystyle \int_\Re  d\xi   \int_0^t   H_2 (\xi,\tau)d\tau = \int_\Re  d\xi  \int_0^t H_1 (\xi,y) dy \int_y^t \psi_\delta (y,\tau) d\tau
\end{eqnarray}
with 

\begin{eqnarray} 
&\nonumber \displaystyle\int_y^t \psi_\delta (y,\tau) d\tau=  1- e^{-\delta \,d(t-y) } J_0 ( 2\sqrt{\delta y(t-y)} + 
\\
\\
&\nonumber \displaystyle -\delta \, d\int _y^t  e^{-\delta \,d(t-y) } J_0 ( 2\sqrt{\delta y(t-y)}d\tau. 
\end{eqnarray}

Consequently, for   (\ref{38}), one has:

\begin{eqnarray}
 \nonumber & \displaystyle\int_\Re  d\xi   \int_0^t  H_2 (\xi,\tau) d\tau  = \int_\Re d\xi \int _0^t   H_1 (\xi,\tau) d\tau -  \int_\Re  K_\delta  (\xi,t) d\xi \,+
\\
\\
&\nonumber \displaystyle\,- \,\delta d  \,\int_\Re   d\xi   \int_0^t     K_\delta (\xi, \tau)   d\tau
\end{eqnarray}
So that, according to (\ref{A16}), it results: 

\begin{equation} \label{A50}
\displaystyle\int_0^t\,d\tau\, \int_\Re \,H(\xi,\,\tau) \, \,d\xi =  \int_\Re  K_\delta  (\xi,t) d\xi \,+\delta d \, \int_\Re  d\xi \int_0^t   K_\delta (\xi, \tau) d\tau  .
\end{equation}

Now, let us evaluate

\begin{eqnarray}
 &\nonumber \displaystyle  e^{-\,\varepsilon\,\beta \,t} *\,H_2 =  \int^t_0 \,H_1 \,(x,y)\, dy \, \int^t_y \, e^{-\,\beta \varepsilon  \,(t-\tau)\,}\psi_\delta \,(y,\tau)\, d\tau. 
 \end{eqnarray}
Considering  $(\ref{A22})_3$, after an integration by parts, one obtains:

\begin{eqnarray}
 &\nonumber \displaystyle  e^{-\varepsilon\beta t} *H_2 =  \displaystyle  e^{-\beta \varepsilon  t} *H_1 (x,t) \,-  \int^t_0 e^{-\delta d (t-y)}  H_1 (x,y) J_0  ( 2 \sqrt{\delta y ( t-y)  }) dy+
\nonumber\\
\\\nonumber
 &\nonumber \displaystyle  
  (\varepsilon \beta - \delta d)  \int^t_0   d\tau \int^\tau_0   H_1 \,(x,y)\, e^{-\delta d (\tau-y)}   \, e^{- \varepsilon \beta \,(t-\tau)}\, J_0 \, (\, 2\, \sqrt{\delta\, y ( \tau-y)  }\,) dy,  
\end{eqnarray}

and, for (\ref{38}), it results:

\begin{eqnarray} \label{A40}
e^{-\,\varepsilon\,\beta \,t} *\,H_2=  e^{-\,\varepsilon\,\beta \,t} *\,H_1- K_\delta+ (\varepsilon \beta - \delta d) e^{-\beta \varepsilon  \,t} * K_\delta.
\end{eqnarray}

Moreover, since (\ref{A16}) and (\ref{A40}), one deduces:

\begin{eqnarray} \label{A41}
e^{-\,\varepsilon\,\beta \,t} *\,H=   K_\delta+ (\delta d-\varepsilon \beta ) e^{-\beta \varepsilon  \,t} * K_\delta.
\end{eqnarray}

Now, let  us denote  by

\begin{equation}
\,\, f_1(x,t)\,\star f_2(x,t)  = \int_\Re \, f_1(\xi, t) f_2(x-\xi,t) \,d\xi
\end{equation}

 the convolution with respect to the space $ x, $ 
and let

\begin{equation}
H  \otimes F\,=\, \int_0^t\,d\tau\, \int_\Re \,H(x-\xi,t-\tau) \, \,F \,[\,\xi, \tau ,u(\xi,\tau)\,]\,d\xi.
\end{equation}

Considering  (\ref{317}) and (\ref{A41}) one has:

\begin{equation}
\label{A51}
\left \{
   \begin{array}{lll}
\displaystyle H  \otimes \, e^{-\delta d t }=\int_\Re   K_\delta (\xi,t)\,\,d\xi,
\\
\\ 
 \displaystyle H  \otimes \, e^{- \beta \varepsilon\, t } =\int_\Re \big[  K_\delta+ (\delta d-\varepsilon \beta ) e^{-\beta \varepsilon  \,t} * K_\delta\,\big]  d\xi.
  \end{array}
  \right.  
  \end{equation}
  
  Moreover, it results:

\begin{equation}
\label{315}
\left \{
   \begin{array}{lll}
H  \otimes  ( y_0(x)\, e^{-\delta d t })= y_0 \star  K_\delta
  \\
  \\
H  \otimes  ( w_0(x) \,e^{-\beta \varepsilon t })= w_0 \star [  K_\delta+( \delta d-\varepsilon \beta ) e^{-\beta \varepsilon  \,t} * K_\delta\,]  
 \end{array}
  \right.  
\end{equation}

where

\begin{equation}
\label{316}
%\left \{
 %  \begin{array}{lll}
  w_0 \star  ( \delta d-\varepsilon \beta ) e^{-\beta \varepsilon  \,t} * K_\delta\, =  ( \delta d-\varepsilon \beta ) \,  w_0 \, \otimes ( e^{-\beta \varepsilon  \,t}  K_\delta\,).
% \end{array}
%  \right.  
\end{equation}

Consequently, given  (\ref{18}) and  (\ref{A14}), for (\ref{A50}),(\ref{A51}),(\ref{315}) and (\ref{316}), it results:

\begin{eqnarray}  \label{318}
%\begin{split}
 \nonumber & \displaystyle u(x,t) \,=\, u_0 (x) \star H \,  +  (y_0(x) -\, w_0(x)) \star  K_\delta + \, \,\varphi (u) \otimes  H +\,
\\ \nonumber
\\ 
&\nonumber\displaystyle  +(\varepsilon \beta - \delta d) w_0(x) \otimes e^{-\beta \varepsilon  \,t} K_\delta+  \frac{c}{\beta}\big ( \delta d - \varepsilon \beta )e^{-\beta \varepsilon  \,t} \otimes  K_{\delta}  + 
\\
\\
\nonumber &\displaystyle  + \bigg (\frac{h}{d}\,- \frac{c}{\beta}\,\bigg)  \delta  d \,\otimes \, K_\delta  
\end{eqnarray}
and   this formula, together with  relations (\ref{17}), allow us  to determine  also  $\, v(x,t) \,$  and $\, y(x,t) \,$  in terms of the data.

\section{Explicit solutions}

Several methods have been developed to find exact solutions  related to  partial differential equations \cite{dma18,Lg,mda19,adc,k18}. In this case, by referring to  \cite{krs},   an extra term is added in order to achieve some    solutions.
Accordingly, let us consider

\begin{equation}
\label{A60}
  \left \{
   \begin{array}{lll}
    \displaystyle{\frac{\partial \,u }{\partial \,t }} =\,  D \,\frac{\partial^2 \,u }{\partial \,x^2 }
     \,-\, w\,\,+y\,\, + f(u ) \,  \\
\\
\displaystyle{\frac{\partial \,w }{\partial \,t } }\, = \, \varepsilon (-\beta w +c +u)+ k\, u^2
\\
\\
\displaystyle{\frac{\partial \,y }{\partial \,t } }\, = \,\delta (-u +h -dy)
   \end{array}
  \right.
\end{equation}                                                     
where $ k\neq0 $ and let us assume  $ \varepsilon \beta = \delta \, d $  and   $ f(u) =  2\,u\,(a-u)\,(u-1).  $

Under these conditions, problem (\ref{19})  turns into: 

 \begin{equation}                                                     \label{A61}
  \left \{
   \begin{array}{lll}
   \displaystyle  u_t - D  u_{xx} + 2au + (\varepsilon + \delta) \int^t_0  \,e^{-\, \varepsilon  \beta (t-\tau)}\,u(x,\tau) \, d\tau \,=\, F(x,t,u) \,\\    
\\ 
 \displaystyle \,u (x,0)\, = u_0(x)\,  \,\,\,\,\, x\, \in \, \Re, 
   \end{array}
  \right.
 \end{equation}
 where, by denoting   $\displaystyle  \varphi_1 = 2\,  u^2\, (\,a+1\,-u\,) + k \int_0^t\, \,e^{\,-\,\varepsilon \,\beta\,(\,t-\tau\,)}\,  \, u^2(x,\tau)\,\,  d\tau, \, $ it results:

\begin{equation}
\label{188}
\displaystyle F =\varphi_1 (u)  - w_0 e^{- \varepsilon  \beta  t}+ y_0  e^{- \delta  d  t}- \frac{c}{\beta}( 1- e^{- \varepsilon \beta t} )+\frac{h}{d}( 1- e^{ -\delta d t} )
\end{equation}
and

\begin{equation}
\label{AA17}
\left \{
   \begin{array}{lll}
\displaystyle  w =w_0  e^{-\varepsilon \beta t} + \frac{c}{\beta}( 1- e^{-  \varepsilon  \beta t} )\,+ \int_0^t e^{-\varepsilon \beta(t-\tau)} \big[\,\varepsilon\, u(x,\tau) + k  u^2(x,t) \big ] d\tau 
  \\
  \\
\displaystyle  y\, =\,y_0 \, e^{\,-\,\delta\, d\,t\,} \,+\, \frac{h}{d}\,( 1- e^{ - \, \delta \,d  \,t} )\,- \delta \int_0^t\, e^{\,-\,\delta \,d\,(\,t-\tau\,)}\,u(x,\tau) \, d\tau. 
 \end{array}
  \right.  
\end{equation}

In order to find explicit solutions, let us  introduce 

\[  z=x- C \,t,\]
obtaining,  from  system (\ref{A60}), the following equation:

\begin{eqnarray} \label{41}
\nonumber & \displaystyle  D\,C\, u_{zzz} +(C^2-\varepsilon \beta\,D ) u_{zz} - 6 C  u^2 u_{z} +4  C (a+1) u \, u_{z} +2 \varepsilon \beta u^3  +  k\,u^2
\\
\\
\nonumber &  \displaystyle - \, C\, (2\,a\,+ \varepsilon \beta)\, u_z  - 2\,\varepsilon \beta (a+1) u^2 + 2\,  \varepsilon \beta \,a \,u  + (\varepsilon +\delta)\,u + \varepsilon \,c - \delta h=0.
\end{eqnarray}
Now, let us  consider

\begin{equation} \label{A65}
f(z)= \sqrt{y} \,\,\tanh \,(\sqrt{ y}\, (z-z_0)),
\end{equation}
that is a solution of Riccati type  equation:

\[ f_{z} + f^2 -y =0, \]
and let us assume 
   
\begin{equation} \label{53}
 u(z) = A\,\, f(z)+\,b.
\end{equation}

Since
 \begin{eqnarray} \label{55}
 \nonumber & u_z = \, - A \,f ^2(z) +A\,y  
 \\
 \nonumber & u_{zz} =2\,A f^3-2\,A\,f\,y
 \\
  \nonumber & u_{zzz} = - 6 A f^4(z) +8 \, A\, y f^2(z) - 2\, A \, y^2
   \\
   & u \, u_z = - A^2\, f^3(z) - A b f^2  + A^2 y\,f + A by 
  \\
  \nonumber &u^2  \,u_z  = - A^3 f^4 - 2 A^2\,b\, f^3 +( A^3 y - A b^2 ) f^2 + 2 A ^2 b\,y f +A b^2 y, 
  \end{eqnarray}
in order  to satisfy equation (\ref{41}),  
constant $ b  $ needs to assume the following  expression:

\begin{equation}\label{56}
b=  \frac{\varepsilon \beta \,A}{2C}-\frac{ \varepsilon\, +\, \delta }{2\,k} 
\end{equation}
and, moreover,  it has to  be 
\begin{eqnarray}
 \nonumber & A^2 = D
 \\ 
 &\nonumber \displaystyle  a+1= 3\, b + \frac{ C\, A \,  }{ 2 \,D}
 \\ 
 &\nonumber \displaystyle D\, y= {3 } \,b^2 + \bigg(\frac{C}{A} -3- \frac{k}{\varepsilon \beta }\bigg)\, b -\frac{C }{2 A}- \frac{\varepsilon +\delta }{2\, \varepsilon\, \beta} + 1
  \end{eqnarray}
and 
\begin{eqnarray}
\nonumber \displaystyle & \varepsilon \,c-\delta\,h = 2\, C DA y^2  + \varepsilon \beta \, C A y - 2 \varepsilon \beta\,b^3 + 2\,\varepsilon \beta \, (a+1) \, b^2 +
\\
&\nonumber \displaystyle- 2\, \varepsilon \beta\, a\,b+ 6 \, C\,A\,y\,b^2- 4 C\,y \,(a+1) A b+ 2\,a\, C\,A\,y\,- ( \varepsilon + \delta) \,b - k\,b^2.
\end{eqnarray}

%\parpic
\begin{figure}
\centering
{\includegraphics  [width=80mm] 
{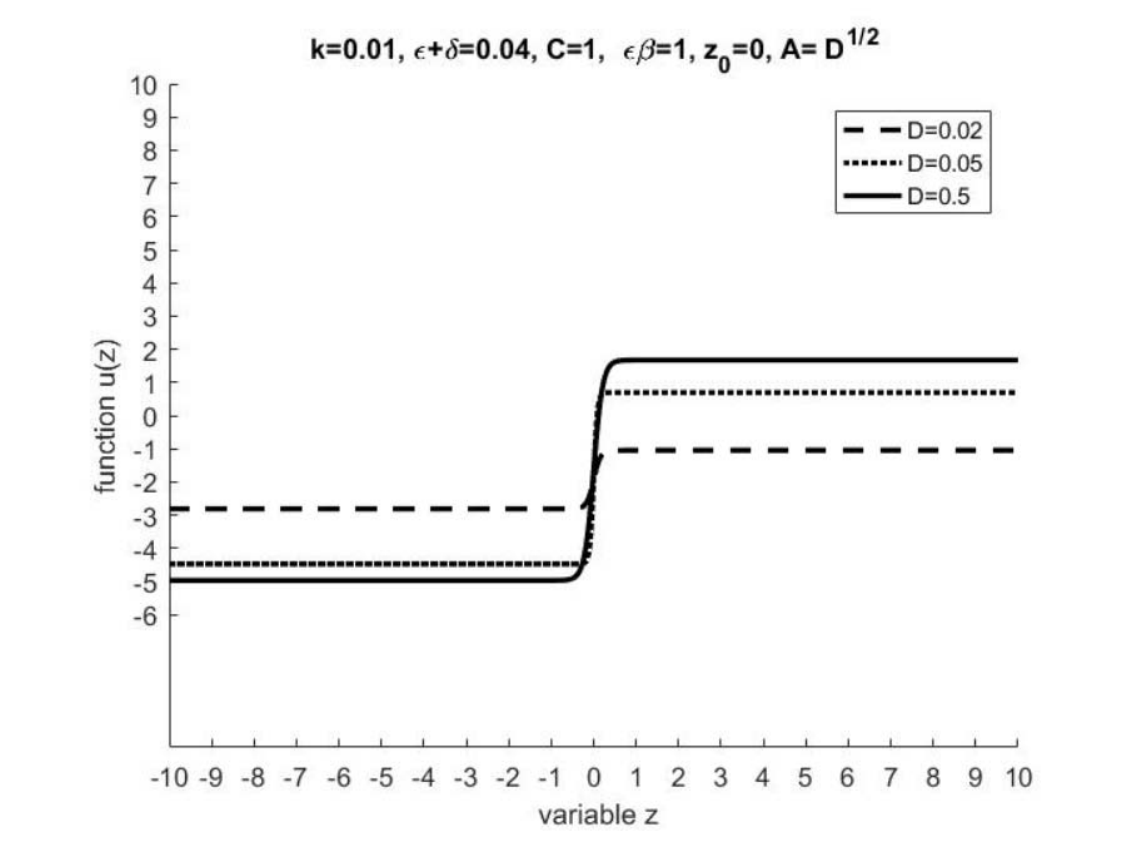}} 
\caption {Solution u(z) when  $ \varepsilon \beta=1, $  $z_0=0  $, $ \varepsilon + \delta = 0.04, k= 0.01  ,C=1, $  $  A = \sqrt{D}  $ and  for  $D=0.02,  D=0.05,D=0.5. $  }
\label{figure 1}
\end{figure}

So, if for instance, it is assumed    $ \varepsilon \beta=1, $  $z_0=0  $, $ \varepsilon + \delta = 0.04, k= 0.01 $ and $ C=1 $  with $ D> 0.019, $ for  $ A = \sqrt{D}, $ by introducing

\begin{equation}
g(D)= \sqrt{3900\sqrt{D} -1501 \,D + 150 \,D \, \sqrt{D}-500},
\end{equation}
equation (\ref{53}) gives

\begin{equation} \label{63}
u(z)= \frac {\sqrt{2} }{20\, D^{1/4}} \, \,g(D)  \, \tanh\bigg(\frac{\sqrt{2}\,\,z\,g(D)}  { 20 \, D^{3/4}\,} \, \bigg) + \frac{\sqrt{D}}{2} -2. 
\end{equation}
 
 In Fig.1, solutions u(z) expressed by means of  formula (\ref{63}) are illustrated for different values of  $D, $ 
 by  showing  that  the amplitude increases as $ 0<D <1 $ increases.

\textbf{Remark} When 
 fast variable $ u$ simulates the membrane potential of a nerve cell, while  slow
variable $ w $ and superslow   variable $ y $ determine the corresponding ion number densities,  model (\ref{A60}) with its solutions  can be of interest in applications  to understand how impulses are propagated from one neuron to another.  Moreover,  as  similarly underlined in \cite{k2019}, the  knowledge of  exact solutions   can help  in testing different applications of models in neuroscience.

\textbf{Acknowledgements}

The present work has been  developed with the economic support of MIUR (Italian Ministry of University and Research) performing the  activities of the project   
ARS$01_{-}  00861$  “Integrated collaborative 
 systems for smart factory - ICOSAF".

\hspace{-6mm} The  paper has been performed under the auspices of G.N.F.M. of INdAM.

\hspace{-6mm} The authors are grateful to anonymous referees for their helpful comments.

\begin {thebibliography}{99} 
%\pdfbookmark[0]{References}{Bibliografia}

 \bibitem{bbks}R. Bertram, T. Manish J. Butte,T. Kiemel
and A. Sherman,
 Topological and phenomenological classification of bursting oscillations,  Bull. Math.Biol, Vol. 57, No. 3, pp. 413, (1995)

 \bibitem{ws15} J.Wojcik, A. Shilnikov, Voltage Interval Mappings for an Elliptic
Bursting Model in Nonlinear Dynamics New Directions
Theoretical Aspects
 González-Aguilar H; Ugalde E. (Eds.)  219 pp  Springer,(2015)

\bibitem{ZB}   Zemlyanukhin A. I., Bochkarev A. V.,  Analytical Properties and Solutions of the FitzHugh – Rinzel Model, Rus. J. Nonlin. Dyn.,	 Vol. 15, no. 1, pp.  3-12, (2019)

\bibitem{k2019}N. A. Kudryashov, On Integrability of the FitzHugh – Rinzel Model,Russian Journal of Nonlinear Dynamics, 2019, vol. 15, no. 1, pp. 13–19.

\bibitem {i}Izhikevich E.M., Dynamical Systems in Neuroscience: The Geometry of Excitability and Bursting,p.397. The MIT press. England (2007)

\bibitem{dr13}M.De Angelis, P. Renno, { Asymptotic effects of boundary perturbations in excitable systems,} Discrete and continuous dynamical systems series B, 19, no 7 (2014)

\bibitem{krs}N.K.Kudryashov, K.R. Rybka, A. Sboev,
 Analytical properties of the perturbed FitzHugh-Nagumo model,
Applied Mathematics Letters, 76 , 142-147 ,(2018)

\bibitem{nono} M. De Angelis, On a model of superconductivity and biology, Advances and Applications in Mathematical Sciences
Volume 7, Issue 1,  Pages 41-50, (2010)

\bibitem{r1}Rionero, S.  A rigorous reduction of the $L\sp 2$-stability of the solutions to a nonlinear binary reaction-diffusion system of PDE's to the stability of the solutions to a linear binary system of ODE's, J. Math. Anal. Appl. { 319 } no. 2, 377-397 (2006)

\bibitem{r2}Rionero, S, Torcicollo,I. On the dynamics of a nonlinear reaction–diffusion duopoly mode,International Journal of Non-Linear Mechanics,Volume 99,  Pages 105-111,(2018)

%Rionero, S.,: A nonlinear $L\sp 2$-stability analysis for two-species population dynamics with dispersal. Math. Biosci. Eng. {\bf 3}  no. 1, 189-204 (2006)(electronic).

\bibitem{mda13}M. De Angelis, Asymptotic Estimates Related to an Integro Differential Equation, Nonlinear Dynamics and Systems Theory, 13 (3)  217–228 (2013)

%\bibitem{fps}P.Colli Franzone, L.Pavarino, S.Scacchi Mathematical Cardiac Electrophysiology Springer,  2014 - 397 pp

   \bibitem{glrs}G. Gambino,M. C. Lombardo,G. Rubino, M. Sammartino, Pattern selection in the 2D FitzHugh–Nagumo model, Ric. di Mat. https://doi.org/10.1007/s11587 1–15 (2018)

\bibitem{ks}  Keener, J. P. Sneyd,J. {  Mathematical Physiology }. Springer-Verlag, N.Y, 470 pp, (1998)

\bibitem{qmv} A. Quarteroni, A. Manzoni and C. Vergara,
The cardiovascular system:Mathematical modelling, numerical algorithms and clinical applications,
 Acta Numerica, vol 26,pp. 365-590,(2017)

\bibitem{dcgda18} F. De Angelis, D. Cancellara, L. Grassia, A. D'Amore,  The influence of loading rates on hardening effects in elasto/viscoplastic strain-hardening materials,   Mechanics of Time-Dependent Materials,   22 (4)  533-551, (2018)

\bibitem{sandra}Carillo, S.; Chipot, M.; Valente, V.; Vergara Caffarelli, G. On weak regularity requirements of the relaxation modulus in viscoelasticity. Commun. Appl. Ind. Math. 10 , no. 1, 78–87,(2019)

\bibitem{dea18}  F. De Angelis, D. Cancellara, Dynamic analysis and vulnerability reduction of asymmetric structures: Fixed base vs base isolated system, Composite Structures, 219 203-220, (2019).   

\bibitem{candd19}  F. De Angelis, Extended formulations of evolutive laws and constitutive
relations in non-smooth plasticity and viscoplasticity,
Composite Structures, 193 35-41, (2018).

\bibitem{sw} H. Simo, P. Woafo,Bursting oscillations in electromechanical systems, Mechanics Research Communications 38  537 – 541, (2011)

\bibitem{jnbbikem}E.Juzekaeva,  A. Nasretdinov,  S. Battistoni, T. Berzina,  S. Iannotta,  R. Khazipov,  V. Erokhin,  M. Mukhtarov,  Coupling Cortical Neurons through Electronic Memristive Synapse,  Adv. Mater. Technol.  4, 1800350 (6) (2019)

\bibitem{clag} 
F. Corinto, V. Lanza, A. Ascoli, and Marco Gilli,  Synchronization in Networks of FitzHugh-Nagumo
Neurons with Memristor Synapses, in  20th European Conference on Circuit Theory and Design (ECCTD)  IEEE. (2011)

\bibitem {drw} M De Angelis, A priori estimates for excitable models,Meccanica, Volume 48, Issue 10, pp 2491–2496 (2013) 
 
\bibitem {m2} Murray, J.D.   {   Mathematical Biology I,  }. Springer-Verlag, N.Y, 767 pp, (2003)

\bibitem{xxcj} Wenxian Xie, Jianwen Xu, Li Cai, Yanfei Jin,  Dynamics and Geometric Desingularization of the Multiple Time Scale FitzHugh Nagumo Rinzel Model with Fold Singularity , Communications in Nonlinear Science and Numerical Simulation, 63, p. 322-338.  (2018)

\bibitem{c}J. R. Cannon, The one-dimensional heat equation, Addison-Wesley Publishing Company  483 pp, (1984)

\bibitem  {dr8}De Angelis, M. Renno, P, { Existence, uniqueness and a priori estimates for a non linear integro - differential equation, }Ricerche di Mat. 57  95-109 (2008)

\bibitem{dma18} M. De Angelis,  { On the transition from parabolicity to hyperbolicity
for a nonlinear equation under Neumann boundary
conditions,} Meccanica, Volume 53, Issue 15, pp 3651–3659, (2018)

\bibitem{Lg}Li H., Guoa Y.:  New exact solutions to the Fitzhugh Nagumo equation, Applied Mathematics and Computation 
{\bf180},  2, 524-528 (2006)

\bibitem{mda19} M. De Angelis,  A wave equation perturbed by viscous terms: fast and
slow times diffusion effects in a Neumann problem,Ricerche di Matematica,  Volume 68, Issue 1, pp 237–252, (2019)

\bibitem{adc} B.Prinaria, F.Demontis, Sitai Li, T.P.Horikis, Inverse scattering transform and soliton solutions for square matrix nonlinear Schrödinger equations with non-zero boundary conditions,Physica D: Nonlinear Phenomena,Volume 368,   Pages 22-49, (2018)

\bibitem{k18}N.K.Kudryashov,  Asymptotic and Exact Solutions of the FitzHugh–Nagumo Model, Regul. Chaotic Dyn., vol 23,	No 2, 152–160, (2018)

\end{thebibliography}

\end{document}